\newcommand{\qitem}[1]{\noindent\leavevmode\hangindent1.5\parindent%
  \noindent\hbox to1.5\parindent{#1\hss}\ignorespaces}
\newcommand{\edp}[2]{#1^{[\natural #2]}}
\def\soft#1{\leavevmode\setbox0=\hbox{h}\dimen7=\ht0\advance
    \dimen7 by-1ex\relax\if t#1\relax\rlap{\raise.6\dimen7
    \hbox{\kern.3ex\char'47}}#1\relax\else\if T#1\relax
    \rlap{\raise.5\dimen7\hbox{\kern1.3ex\char'47}}#1\relax
    \else\if d#1\relax\rlap{\raise.5\dimen7\hbox{\kern.9ex
    \char'47}}#1\relax\else\if D#1\relax\rlap{\raise.5\dimen7
    \hbox{\kern1.4ex\char'47}}#1\relax\else\if l#1\relax
    \rlap{\raise.5\dimen7\hbox{\kern.4ex\char'47}}#1\relax
    \else\if L#1\relax\rlap{\raise.5\dimen7\hbox{\kern.7ex
    \char'47}}#1\relax\else\message{accent \string\soft
    \space #1 not defined!}#1\relax\fi\fi\fi\fi\fi\fi}
\newtheorem{theo}{Theorem}[section]
\newtheorem{mydef}[theo]{Definition}
\newtheorem{prop}[theo]{Proposition}
\newtheorem{coro}[theo]{Corollary}
\newtheorem{lem}[theo]{Lemma}
\newtheorem{prob}[theo]{Problem}
\title{Colouring exact distance graphs of chordal graphs}
\author{Daniel A. Quiroz \thanks{\,\textit{Current affiliation:} Departamento de Ingenier\'ia Matem\'atica and Centro de Modelamiento Matem\'atico, Universidad de Chile, Santiago, Chile. Email:
    \texttt{dquiroz@dim.uchile.cl}.} \\ Department of Mathematics \\ London
    School of Economics and Political Science\\ London, WC2A 2AE, UK}
\date{}
\begin{document} 
\maketitle
\vskip .15 in
\begin{abstract} 
  For a graph $G=(V,E)$ and positive integer~$p$, the \emph{exact
    distance-$p$ graph}~$\edp{G}{p}$ is the graph with vertex set~$V$ and
  with an edge between vertices~$x$ and~$y$ if and only if~$x$
  and~$y$ have distance~$p$. Recently, there has been an effort to obtain
  bounds on the chromatic number $\chi(\edp{G}{p})$ of exact distance-$p$
  graphs for $G$ from certain classes of graphs. In particular, if a
  graph $G$ has tree-width $t$, it has been shown that
  $\chi(\edp{G}{p}) \in \mathcal{O}(p^{t-1})$ for odd $p$, and
  $\chi(\edp{G}{p}) \in \mathcal{O}(p^{t}\Delta(G))$ for even $p$. We
  show that if $G$ is chordal and has tree-width $t$, then
  $\chi(\edp{G}{p}) \in \mathcal{O}(p\, t^2)$ for odd $p$, and $\chi(\edp{G}{p})
  \in \mathcal{O}(p\, t^2 \Delta(G))$ for even~$p$.

If we could show that for every graph $H$ of tree-width $t$ there is a chordal graph~$G$ of tree-width $t$ which contains $H$ as an \emph{isometric subgraph} (i.e., a distance preserving subgraph), then our results would extend to all graphs of tree-width $t$. While we cannot do this, we show that for every graph $H$ of genus $g$  there is a graph $G$ which is a triangulation of genus $g$ and contains $H$ as an isometric subgraph.

  \bigskip\noindent
  Key Words: \emph{exact distance graph, chordal graph, tree-width,
    genus, adjacent-cliques graph, isometric subgraph}
\end{abstract}
\vskip .1 in

\section{Introduction}\label{sec1}

All graphs in this paper are assumed to be finite, undirected, simple and
without loops. For a graph $G=(V,E)$ and vertices $u,v \in V$, we
denote by $d_G(u,v)$ (or $d(u,v)$ when there is no danger of ambiguity) the
distance between $u$ and $v$, that is, the number of edges in a shortest
path between $u$ and $v$.

For a positive integer~$p$, the \emph{$p$-th power} graph $G^p=(V,E^p)$
of $G$ has the same vertex set as $G$, and the pair $uv$ belongs to
$E^p$ if and only if $d_G(u,v)\le p$.

Problems related to the chromatic number of graph powers were first
considered by Kramer and Kramer~\cite{KK69a,KK69b} in 1969 and have enjoyed
significant attention ever since. It is clear that for $p\ge 2$ any power
of a star is a clique. Hence, in order to obtain bounds on $\chi(G^p)$ we
need to use the maximum degree $\Delta(G)$ of $G$. One can easily see that
any graph $G$ with $\Delta(G) \ge 3$ satisfies
\[ \chi(G^p)\le 1+\Delta(G^p)\le
1+\Delta(G)\cdot\sum_{i=0}^{p-1}(\Delta(G)-1)^i\in
\mathcal{O}(\Delta(G)^p).\]

However, there are many classes of graphs for which much better bounds can
be obtained. Recall that a graph is \emph{$k$-degenerate} if every subgraph
of $G$ contains a vertex of degree at most~$k$. Parametrising in terms of
the degeneracy, Agnarsson and Halld\'orsson~\cite{AH03} gave upper bounds
for many classes of graphs.

\begin{theo}[Agnarsson and
  Halld\'orsson~\cite{AH03}]\label{largepow}\mbox{}\\*
  Let $k$ and $p$ be positive integers. There exists $c=c(k,p)$
  such that for every $k$-degenerate graph $G$ we have $\chi(G^p)\le
  c\cdot\Delta(G)^{\lfloor p/2\rfloor}$.
\end{theo}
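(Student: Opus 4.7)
I plan to bound the degeneracy of $G^p$ and appeal to the greedy inequality $\chi\le d+1$. Specifically, I will argue that $G^p$ is $d$-degenerate with $d\le c(k,p)\cdot\Delta(G)^{\lfloor p/2\rfloor}$. The natural candidate ordering is the \emph{reverse} of a degeneracy ordering of $G$: fix $\sigma\colon V(G)\to\{1,\dots,n\}$ so that each $v_i$ has at most $k$ neighbours in $\{v_{i+1},\dots,v_n\}$, and colour the vertices of $G^p$ in decreasing $\sigma$-order. The task then reduces to showing that, for any vertex $v_i$, the number of $v_j$ with $j>i$ and $d_G(v_i,v_j)\le p$ is at most $c(k,p)\cdot\Delta(G)^{\lfloor p/2\rfloor}$.

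To each such $v_j$ I would associate a shortest $v_i$-$v_j$ path $(v_i=w_0,\dots,w_\ell=v_j)$ with $\ell\le p$, and classify each edge along it as a $\sigma$-\emph{ascent} (to a neighbour of higher $\sigma$) or a $\sigma$-\emph{descent} (to a neighbour of lower $\sigma$). By the defining property of the degeneracy ordering, each vertex has at most $k$ later-$\sigma$ neighbours, so from any fixed vertex an ascent offers at most $k$ choices while a descent offers at most $\Delta=\Delta(G)$. Consequently, the number of paths of length $\ell$ from $v_i$ with a specified ascent/descent pattern having $a$ ascents and $b=\ell-a$ descents is at most $k^{a}\cdot\Delta^{b}$.

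The combinatorial core is to show that the \emph{distinct endpoints} $v_j$ with $j>i$, summed over all admissible patterns and lengths $\ell\le p$, total at most $c(k,p)\cdot\Delta^{\lfloor p/2\rfloor}$. Intuitively, the constraint $\sigma(v_j)>\sigma(v_i)$ combined with the shortest-path condition should force enough ascents (each contributing the small factor $k$) that descents contribute at most $\lfloor p/2\rfloor$ factors of $\Delta$ overall. The principal obstacle is that an individual shortest path may have an unfavourable pattern — for example, a path of length three with one ascent followed by two descents (so that $a=1<\lceil 3/2\rceil$) is perfectly consistent with $\sigma(v_j)>\sigma(v_i)$. One must therefore either choose a \emph{canonical} shortest path for each endpoint (selected, say, to maximise the number of ascents, or to place the peak of $\sigma$ along the path in a specific position) and prove that this canonical path always has enough ascents, or replace the per-path counting by a more global charging argument that handles ``bad'' patterns collectively. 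Executing this step — which constitutes the delicate combinatorial heart of the argument — completes the bound on back-degree in $G^p$, from which the degeneracy bound and hence the chromatic inequality follow by summing over lengths $\ell\le p$ and patterns.
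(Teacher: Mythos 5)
The paper does not prove this statement at all: Theorem~\ref{largepow} is quoted from Agnarsson and Halld\'orsson~\cite{AH03} and used as a black box, so there is no in-paper proof to compare against. Your general strategy --- bound the back-degree of $G^p$ in a degeneracy ordering of $G$, with ``ascent'' steps costing a factor $k$ and ``descent'' steps costing a factor $\Delta(G)$ --- is indeed the spirit of the argument in \cite{AH03} (phrased there in terms of inductiveness). So the plan points in the right direction.

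However, as a proof the proposal has a genuine gap, and it is exactly the one you flag yourself: you never establish that each later endpoint $v_j$ admits a shortest $v_i$--$v_j$ path with at most $\lfloor p/2\rfloor$ descents, and the naive per-path count fails without this. Your own example shows why: a shortest path of length $3$ of type ascent--descent--descent can end at a vertex $v_j$ with $\sigma(v_j)>\sigma(v_i)$ (take $\sigma(v_i)<\sigma(v_j)<\sigma(w_2)<\sigma(w_1)$), and such paths contribute $k\cdot\Delta^2$ to the count, overshooting the target $c\cdot\Delta^1$ for $p=3$. Moreover, the two repairs you sketch are not carried out and neither is automatic: a given pair $u,v$ may have a \emph{unique} shortest path, and that path may have the bad pattern, so no canonical choice among shortest paths can save the per-path count; one genuinely needs a different accounting (for instance, re-anchoring the count at the $\sigma$-extremal vertex of the path and charging endpoints to that vertex, or an induction on $p$ that passes through intermediate powers). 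Since this ``delicate combinatorial heart'' is precisely where all the content of the theorem lies, the proposal as written is a proof outline with its essential step missing, not a proof.
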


\noindent
Note that the exponent on $\Delta(G)$ in this result is best possible, even
for the class of trees, as the complete $\Delta$-ary tree with radius
$\lfloor p/2 \rfloor$ attests~\cite{AH03}.

For some classes of graphs it is possible to obtain similar bounds without
parametrising in terms of the degeneracy. Recall that a graph $G$ is
chordal if every cycle of $G$ has a chord, i.e., if every induced cycle is
a triangle. In \cite{K04}, Kr\'{a}\soft{l} proved that every chordal graph $G$
with maximum degree $\Delta$ satisfies $\chi(G^p)\in
\mathcal{O}(\sqrt{p}\Delta^{(p+1)/2})$ for even $p$, and $\chi(G^p)\in
\mathcal{O}(\Delta^{(p+1)/2})$ for odd $p$. Kr\'{a}\soft{l} also showed that this
upper bound for odd $p$ is tight. It is worth mentioning that, in order to
obtain this tight upper bound, Kr\'{a}\soft{l} gave a simple proof of the already known fact that odd powers
of chordal graphs are also chordal \cite{BP83,D78}.

Given that graphs with tree-width at most $t$ have
degeneracy at most $t$, Theorem~\ref{largepow} gives us an upper bound on
$\chi(G^p)$ when $G$ belongs to a graph class with bounded tree-width.
Although tree-width is usually defined in terms of tree-decompositions, an
equivalent definition can be given in terms of chordal graphs, as follows.

\begin{mydef}\label{deftw}
  The \emph{tree-width} $\mathrm{tw}(G)$ of a graph $G$ is the
  smallest integer $t$ such that $G$ is a subgraph of a chordal graph with
  clique number $t+1$.
\end{mydef}

\noindent
A notion related to graph powers is that of exact distance graphs. For a
positive integer~$p$, the \emph{exact distance-$p$ graph}
$G^{[\natural p]}=(V,E^{[\natural p]})$ of $G$ has the same vertex
set as $G$, and the pair~$uv$ belongs to $E^{[\natural p]}$ if and only if
$d_G(u,v)= p$. Clearly, $E^{[\natural p]}$ is a subset of the edge set
of~$G^p$, which means that $\chi(G^{[\natural p]}) \le \chi(G^p)$. We
immediately see that Theorem~\ref{largepow} also gives an upper bound for
the chromatic number of $G^{[\natural p]}$ in terms of the degeneracy and
the maximum degree of $G$. However, when considering exact distance graphs,
this upper bound is far from best possible. This is attested, for instance,
by the following recent result of Van~den~Heuvel, Kierstead and
  Quiroz~\cite{vdHetal}.

\begin{theo}[Van den Heuvel \emph{et
  al.}~\cite{vdHetal}]\label{tw}\mbox{}

  \qitem{(a)} Let $p$ be an odd integer. For every graph $G$ with
  tree-width at most $t$ we have \\ $\chi(G^{[\natural p]}) \le t\cdot
  \binom{p+t-1}{t} +1 \in \mathcal{O}(p^{t-1})$.

  \qitem{(b)} Let $p$ be an even integer. For every graph $G$ with
  tree-width at most $t$ we have \\$\chi(G^{[\natural p]}) \le \Bigl(
  t\cdot \binom{p+t}{t} +1\Bigr)\cdot \Delta(G) \in \mathcal{O}(p^t\cdot
  \Delta(G))$.
\end{theo}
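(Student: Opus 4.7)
The plan is to prove Theorem~\ref{tw} via the framework of \emph{weak colouring numbers}. For a linear order $L$ of $V(G)$ and an integer $r\ge 0$, a vertex $u$ is weakly $r$-reachable from $v$ if there is a $v$--$u$ path of length at most $r$ whose $L$-minimum vertex is $u$; let $\operatorname{WReach}_r[L,v]$ denote the set of such $u$ (including $v$), and set $\operatorname{wcol}_r(G)=\min_L\max_v|\operatorname{WReach}_r[L,v]|$. A standard result, established by ordering vertices along a DFS of an optimal tree-decomposition, gives $\operatorname{wcol}_r(G)\le\binom{r+t}{t}$ whenever $\mathrm{tw}(G)\le t$. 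I would fix such an order $L$ once and for all.

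The key geometric observation underlying both parts is: if $d_G(u,v)=p$ and $P$ is any shortest $u$--$v$ path, then the $L$-minimum vertex $w$ of $P$ lies in $\operatorname{WReach}_p[L,u]\cap\operatorname{WReach}_p[L,v]$ and satisfies $d(u,w)+d(v,w)=p$. For part (a), $p$ is odd, so $d(u,w)$ and $d(v,w)$ have different parities. I would colour the vertices greedily in the order $L$; when it is $v$'s turn, I must avoid the colours of previously coloured vertices $u<v$ with $d_G(u,v)=p$. For each such $u$ the witness $w$ satisfies $w\le u<v$, so $w$ lies in the ``backward'' portion of $\operatorname{WReach}_p[L,v]$. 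Charging each forbidden colour to the pair (witness $w$, position of $w$ within its bag in the tree-decomposition)---while treating the boundary case $w=u$ separately, which is what shaves one off the binomial---and using that bags have size at most $t+1$, yields at most $t\cdot\binom{p+t-1}{t}$ forbidden colours, leaving a free colour.

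For part (b), with $p=2r$ even, the witness $w$ may be the exact midpoint of $P$, so $d(u,w)=d(v,w)$ and the parity-based separation breaks down. I would recover separation by refining the colour of $v$ to additionally record one neighbour $v'$ of $v$ lying on a shortest $v$--$w$ path: two vertices at distance $p$ sharing a midpoint witness must use distinct such neighbours, so the refined colour separates them. This refinement multiplies the palette by $\Delta(G)$, giving the factor in the stated bound. The binomial $\binom{p+t}{t}$ (rather than $\binom{p+t-1}{t}$) enters because the boundary-case savings available for odd $p$ no longer apply.

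The main obstacle I anticipate is the bookkeeping behind the colour count. Turning the loose statement ``the common witness $w$ distinguishes $u$ from $v$'' into the tight bound of the theorem requires a careful charging argument along $L$, together with the special treatment of the boundary cases $w=u$ (for odd $p$) and $d(u,w)=d(v,w)$ (for even $p$); it is this accounting, rather than the overall strategy, where most of the technical effort will go.
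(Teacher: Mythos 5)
First, a point of context: the paper does not prove Theorem~\ref{tw}; it is quoted as background from~\cite{vdHetal}, so there is no internal proof to compare against. Your overall strategy --- an ordering derived from a tree-decomposition, the $L$-minimum vertex of a shortest path as a common witness lying in $\operatorname{WReach}_p$ of both endpoints, parity for odd $p$, and recording a neighbour towards the witness to pay a factor $\Delta(G)$ for even $p$ --- is indeed the philosophy of the proof in~\cite{vdHetal}, and your two stated ingredients (the bound $\operatorname{wcol}_r(G)\le\binom{r+t}{t}$ for tree-width at most $t$, and the witness observation with $d(u,w)+d(v,w)=p$) are both correct.

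However, there is a genuine gap at the heart of the argument: the colouring mechanism for part (a) does not work as described. You propose to colour greedily along $L$, forbidding at $v$ the colours of all previously coloured $u$ with $d(u,v)=p$, and to bound the number of forbidden colours by charging each such $u$ to the pair (witness $w$, position of $w$ within its bag). But this charge does not control colours: arbitrarily many vertices $u<_L v$ at distance exactly $p$ from $v$ can share the same witness $w$ and the same bag position while being pairwise far apart and not weakly reachable from one another, hence carrying pairwise distinct colours. So the number of forbidden colours is not bounded by the number of distinct charges, and the greedy step does not go through. Symptomatically, the parity observation --- the actual engine of the odd case --- is stated in your sketch but never used in the argument for (a). In the correct argument the colour of $v$ must itself encode distance information to its weakly reachable vertices, so that two vertices $u,v$ with a common witness $w$ and $d(u,w)\ne d(v,w)$ (which is exactly what odd parity guarantees when $w\notin\{u,v\}$; the case $w=u$ is the only one a plain ``avoid your weakly reachable predecessors'' greedy handles) are forced to receive different colours. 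Designing that encoding with only $t\cdot\binom{p+t-1}{t}+1$ colours is the substance of the proof, not bookkeeping, and it is precisely the piece that is missing. The same gap propagates to part (b): the $\Delta(G)$ refinement correctly separates two vertices that share the same midpoint witness, but it only bites once one has already established the unstated mechanism forcing two same-coloured vertices at distance $p$ to agree on their witness in the first place.
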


\noindent
In the following sense, this result actually extends to all classes excluding a fixed minor. Let~$\mathcal{K}$ be a graph class excluding a fixed minor. It was first shown in~\cite{NOdM12} that for every odd integer~$p$, there exist a constant $N=N(\mathcal{K},p)$ such that for every graph $G\in\mathcal{K}$ we have $\chi(G^{[\natural p]})\le N$. In~\cite{vdHetal} it is shown that for every even $p$ there exists a
  constant $N'=N'(\mathcal{K},p)$ such that for every graph
  $G\in\mathcal{K}$ we have $\chi(G^{[\natural p]})\le N'\cdot\Delta(G)$. Moreover, these results remain true if we take $\mathcal{K}$ to be any class with bounded expansion.

Our main result is a significant improvement on the bounds of
Theorem~\ref{tw} for chordal graphs. 

\begin{theo}\label{main1}\mbox{}\\*
  Let~$G$ be a chordal graph with clique number $t\ge 2$.

  \smallskip
  \qitem{(a)} For every odd integer $p\ge 1$ we have
  $\chi(G^{[\natural p]}) \le \binom{t}{2}\cdot (p+1)$.

  \smallskip
  \qitem{(b)} For every even integer $p \ge 2$ we have
  $\chi(G^{[\natural p]}) \le \binom{t}{2}\cdot \Delta(G)\cdot (p+1)$.
\end{theo}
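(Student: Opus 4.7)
The plan is to build an explicit colouring of $\edp{G}{p}$ using the clique-tree structure of the chordal graph $G$. Fix a clique tree $T$ of $G$, i.e., a tree whose nodes are the maximal cliques of $G$ and such that, for every $v \in V(G)$, the set $\mathcal{T}_v := \{X \in V(T) : v \in X\}$ induces a subtree of $T$. Root $T$ at a node $r$, and for $v \in V(G)$ let $c(v)$ be the node of $\mathcal{T}_v$ closest to $r$ and $\ell(v) := d_T(r, c(v))$. Since $|c(v)| \le t$, I can also assign each $v$ a position label $\lambda(v)$ from a set of size $\binom{t}{2}$, encoding an unordered pair in $c(v)$ that distinguishes $v$ together with its attachment to the parent clique of $c(v)$.

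For part~(a) (odd $p$), I colour $v$ by $\Phi(v) := (\lambda(v), \ell(v) \bmod (p+1))$; this uses at most $\binom{t}{2}(p+1)$ colours. To verify $\Phi$ is proper on $\edp{G}{p}$, take $u,v$ with $d_G(u,v) = p$ and consider a shortest $u$--$v$ path $u = u_0, u_1, \ldots, u_p = v$. Each consecutive pair $u_i, u_{i+1}$ lies in a common maximal clique $Z_i$, so $c(u_i)$ and $c(u_{i+1})$ both lie on the $r$-to-$Z_i$ path in $T$ and are therefore comparable in the rooted tree. The cleanest case is a tree ($t = 2$, $\lambda$ trivial): the shortest path has a unique LCA $w$, with lengths $a, b \ge 0$ on the two legs summing to $p$; odd $p$ forces $a \ne b$, and $|\ell(u) - \ell(v)| = |a - b|$ is a non-zero integer of absolute value at most $p$, so $\ell(u) \not\equiv \ell(v) \pmod{p+1}$. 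For general chordal $G$, the role of the position label $\lambda$ is to restrict same-colour pairs to configurations in which this tree-like parity/magnitude argument survives.

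For part~(b) (even $p$), the parity argument does not work directly, so I pay a factor of $\Delta(G)$. For each $v$, fix an arbitrary neighbour $n(v) \in N_G(v)$ and colour $v$ by $(\Phi(v), n(v))$, using at most $\binom{t}{2}(p+1)\Delta(G)$ colours. If $d_G(u,v) = p$ and the enlarged colours agree, then $n(u) = n(v) = w$, so $d_G(u, w) = d_G(v, w) = p - 1$, which is odd; applying the odd-case analysis to $(u, w)$ and $(v, w)$ together with the agreement $\Phi(u) = \Phi(v)$ yields the contradiction.

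The main obstacle is the verification in the odd case: while consecutive $c(u_i)$'s are comparable in $T$, a single $G$-edge can change $\ell$ by more than $1$, so the naive tree-style argument does not immediately apply. The label $\lambda$ must therefore be chosen so that, for two vertices sharing the same $\lambda$, the walk $c(u_0), \ldots, c(u_p)$ in $T$ behaves sufficiently like a walk in a tree---for instance, lies on a single root-to-leaf path with a single turning point---thus recovering the magnitude bound $|\ell(u) - \ell(v)| \le p$ with the correct parity. I expect this ``projection lemma" to be precisely the content of the more general Theorem~\ref{main2} hinted at in the introduction, with the factor $\binom{t}{2}$ emerging as the chromatic number of an auxiliary \emph{adjacent-cliques} graph associated to $T$.
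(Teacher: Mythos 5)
Your plan has the right overall shape (a levelling coordinate modulo $p+1$ plus a per-level label of size $\binom{t}{2}$, with an extra $\Delta(G)$ factor for even $p$), but the two hard ingredients are missing, and your choice of levelling function undermines the easy part as well. The paper levels by \emph{graph distance} from a fixed vertex $x$, i.e.\ $N^\ell(x)=\{v: d_G(v,x)=\ell\}$; this immediately gives $|\ell(u)-\ell(v)|\le d_G(u,v)=p$, so the coordinate $\ell \bmod (p+1)$ separates all same-colour pairs in distinct levels, and the whole problem reduces to colouring a single level. Your $\ell(v)=d_T(r,c(v))$ in a rooted clique tree does not have this property --- as you yourself note, one $G$-edge can change it by much more than $1$ --- so the magnitude bound $|\ell(u)-\ell(v)|\le p$ fails and the mod-$(p+1)$ trick breaks. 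You then hope that the unspecified label $\lambda$ will repair this, but $\lambda$ is never defined; the ``projection lemma'' you defer to is exactly the mathematical content of the theorem, not a detail. In the paper, the $\binom{t}{2}$ arises as follows: each level $G_\ell$ of a chordal graph is chordal with clique number at most $t-1$ and the graph is \emph{shadow complete} (K\"undgen--Pelsmajer); two vertices of $N^\ell(x)$ at distance $p$ have ancestor cliques in $N^{\ell-\lfloor p/2\rfloor}(x)$ that are adjacent (odd $p$) or adjacent-or-intersecting (even $p$); and the \emph{adjacent-cliques graph} of a chordal graph with clique number $t-1$ has chromatic number at most $\binom{t}{2}$, proved via a perfect elimination ordering. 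None of these steps appears in your proposal.

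The even case is broken independently. If $n(v)$ is an arbitrary neighbour of $v$, then the coordinate $n(v)$ ranges over $V(G)$, so your colouring uses up to $\binom{t}{2}(p+1)\,|V(G)|$ colours, not $\binom{t}{2}(p+1)\,\Delta(G)$; compressing to $\Delta(G)$ values requires the two vertices being compared to share an anchor vertex $w$ and an injective labelling $N(w)\to\{1,\dots,\Delta(G)\}$, which is what the paper arranges (the anchor is the common minimum vertex $\mu(K_u)=\mu(K_v)$ of the intersecting ancestor cliques, forced by equality of the first coordinate). Moreover your deduction ``$n(u)=n(v)=w$, so $d_G(u,w)=d_G(v,w)=p-1$'' is false: $n(u)$ is a neighbour of $u$, so $d_G(u,w)=1$, and for $p=2$ the existence of a common neighbour gives no contradiction with $d_G(u,v)=p$ at all. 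To fix part (b) you need the paper's construction: pick $\sigma(y)$ a neighbour of the anchor at distance $\lfloor p/2\rfloor-1$ from $y$, and argue that $\sigma(u)=\sigma(v)$ would create a $u$--$v$ walk of length $p-2$.
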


\noindent
This result is implied by the more general Theorem~\ref{main2} below.

Although Definition~\ref{deftw} tells us that every graph of tree-width $t$
is a subgraph of a chordal graph with clique number $t+1$,
Theorem~\ref{main1} does not extend to all graphs with tree-width at
most $t$. We shall say more about this at the end of this section. Before
that, let us state the full generality of our results.

For two (labelled) graphs $G=(V,E)$ and $G'=(V,E')$ on the same vertex set, define
$G\cup G'=(V,E\cup E')$. For a fixed positive integer $p$,
Theorem~\ref{main1} trivially gives
$\chi(\edp{G}{p_1}\cup \edp{G}{p_2}\cup \dots \cup\edp{G}{p_s})\le
\binom{t}{2}^{s} \cdot \Delta(G)^q \cdot (p+1)^s$ for any subset
$\{p_1,p_2, \dots, p_s\}$ of $\{1,2, \dots, p\}$ with~$q$ even
  elements. Notice that if we take
$\{p_1,p_2, \dots, p_s\}=\{1,2, \dots, p\}$, then we have
$\edp{G}{p_1}\cup \edp{G}{p_2}\cup \dots \cup\edp{G}{p_s}=G^p$, meaning that Theorem~\ref{main1} implies a version of Theorem~\ref{largepow} for chordal graphs. Taking a
subset of even integers turns out to be quite different from taking a
subset of odd integers. For even $p$, we note that the complete $\Delta$-ary
tree of radius $\lfloor p/2 \rfloor$,~$T_{\Delta,p}$, shows that
$\chi(\edp{T_{\Delta,p}}{2}\cup \edp{T_{\Delta,p}}{4}\cup \dots
\cup\edp{T_{\Delta,p}}{p}) \in \Omega(\Delta^{p/2})$. Hence, the bound of
Theorem~\ref{largepow} gives again the right exponent on $\Delta(G)$. In
contrast, we see that for odd $p$ we obtain an upper bound on
$\chi(\edp{G}{1}\cup \edp{G}{3}\cup \dots \cup\edp{G}{p})$ which does not
depend on~$\Delta(G)$. However, these trivial upper bounds stop being
linear in $p$, even if we simply consider
$\chi(\edp{G}{(p-2)}\cup \edp{G}{p})$.

We prove Theorem~\ref{main1} by proving the following stronger result which
gives upper bounds on the chromatic number of all these gradations
between~$G^{[\natural p]}$ and $G^p$. For instance, these upper bounds are
linear in~$p$ if the size of the subsets of $\{1,2, \dots p\}$ considered does not grow with~$p$. 

\begin{theo}\label{main2}\mbox{}\\*
  Let~$G$ be a chordal graph with clique number $t\ge 2$. Let $p$ be a
  positive integer, $S=\{p_1,p_2, \dots ,p_s\}\subseteq \{1,2,\dots ,p\}$
  and $q$ be the number of even integers in $S$.

  \smallskip
  \qitem{(a)} If $1\notin S$, then we have
  $\chi(\edp{G}{p_1}\cup \edp{G}{p_2}\cup \dots \cup\edp{G}{p_s}) \le
  \binom{t}{2}^{s} \cdot \Delta(G)^q \cdot (p+1)$.

  \smallskip
  \qitem{(b)} If $1\in S$, then we have
  $\chi(\edp{G}{p_1}\cup \edp{G}{p_2}\cup \dots \cup\edp{G}{p_s}) \le
  t\cdot \binom{t}{2}^{s-1} \cdot \Delta(G)^q\cdot (p+1)$.
\end{theo}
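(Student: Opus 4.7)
The plan is to construct an explicit proper colouring of $\edp{G}{p_1}\cup\cdots\cup\edp{G}{p_s}$ that factors as a product of a single global ``layer'' label (taking $p+1$ values) together with $s$ per-distance labels, one for each $p_i\in S$. Multiplying out the alphabet sizes will exactly recover the two bounds stated.

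\emph{Setup.} Fix a clique tree $T$ of $G$: a tree whose nodes are the maximal cliques of $G$ such that, for every $v\in V(G)$, the set $T_v$ of nodes whose clique contains $v$ is a subtree of $T$. Root $T$ arbitrarily and, for each $v$, let $\rho(v)$ be the depth in $T$ of the root of $T_v$. Define the layer label $\ell(v):=\rho(v)\bmod(p+1)$. For each $i\in\{1,\dots,s\}$ I plan to define a label $\lambda_i:V(G)\to A_i$ with
\[
|A_i|\le
\begin{cases}
t & \text{if } p_i=1,\\
\binom{t}{2} & \text{if } p_i\ge 3 \text{ is odd},\\
\binom{t}{2}\cdot\Delta(G) & \text{if } p_i \text{ is even},
\end{cases}
\]
satisfying the distinguishing property that $d_G(u,v)=p_i$ and $\ell(u)=\ell(v)$ together imply $\lambda_i(u)\ne\lambda_i(v)$. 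The product colouring $c(v):=\bigl(\ell(v),\lambda_1(v),\dots,\lambda_s(v)\bigr)$ is then proper for the union, using at most $(p+1)\prod_i|A_i|$ colours; substituting the alphabet sizes yields exactly the bounds in (a) and~(b).

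\emph{Building the $\lambda_i$.} For $p_i=1$, take $\lambda_i$ to be an optimal $t$-colouring of $G$, available because chordal graphs are perfect (this accounts for the $t$-factor in part~(b)). For $p_i\ge 2$, the idea is to record a canonical half-way structure along a shortest path from $v$ to its distance-$p_i$ partner. For odd $p_i=2k+1$, I use the fact that $G^{p_i}$ is chordal (Kr\'{a}\soft{l}'s result, cited in the introduction) to show that the middle edge of such a shortest path lies in a common maximal clique of $G$, and $\lambda_i(v)$ encodes this edge as an unordered pair of colours in a fixed $t$-colouring of that clique, giving $\binom{t}{2}$ options. For even $p_i=2k$, the middle of the path is a single vertex $m$ rather than an edge; the encoding then records $m$ in its clique together with the specific neighbour of $m$ on $v$'s side of the path, contributing the extra $\Delta(G)$ factor.

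\emph{Main obstacle.} The real work is in proving the distinguishing property. Here the hypothesis $\ell(u)=\ell(v)$ is crucial: it forces $\rho(u)\equiv\rho(v)\pmod{p+1}$, which I expect to imply that any shortest path between $u$ and $v$ stays within a bounded region of $T$ and crosses a canonical ``highest'' clique that can be used to select the middle edge (or vertex) in a way depending only on the unordered pair $\{u,v\}$. A careful analysis of how shortest paths in chordal graphs interact with clique trees will be needed to show both that this middle clique is well-defined (independent of which shortest path is picked) and that within it the half-way structure genuinely differs at $u$ and $v$. For even $p_i$ the $\Delta(G)$ factor is needed precisely because the middle vertex alone does not break the symmetry between the two sides of the path.
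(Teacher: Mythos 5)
Your overall architecture --- a product colouring consisting of one ``layer'' label taking $p+1$ values together with one label per element of $S$, with alphabet sizes $t$, $\binom{t}{2}$ and $\binom{t}{2}\cdot\Delta(G)$ for $p_i=1$, odd $p_i\ge3$ and even $p_i$ respectively --- is exactly the shape of the paper's proof, and the bookkeeping multiplies out to the right bounds. But two essential ingredients are missing, and one of your choices appears to be actually wrong rather than merely unproved. First, the layer label. The paper takes $f(v)=d_G(x,v)\bmod (p+1)$ for a fixed vertex $x$; the point is that $|d_G(x,u)-d_G(x,v)|\le d_G(u,v)$, so if $d_G(u,v)=p_i\le p$ and $f(u)=f(v)$ then $u$ and $v$ lie in the \emph{same} BFS level, and all remaining work happens inside a single level. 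Your label $\rho(v)\bmod(p+1)$, the depth of the root of $T_v$ in a rooted clique tree, has no such Lipschitz property: two \emph{adjacent} vertices can have $\rho$-values differing by an arbitrary amount (in the star $K_{1,n}$ whose clique tree is the path $\{c,v_1\},\dots,\{c,v_n\}$ rooted at one end, the centre has $\rho=0$ while the $i$-th leaf has $\rho=i-1$). Consequently $\rho(u)\equiv\rho(v)\pmod{p+1}$ does not confine a shortest $uv$-path to a bounded region of $T$ and does not usefully restrict the set of pairs your labels $\lambda_i$ must separate; the layer coordinate buys you essentially nothing, and you would be back to having to separate essentially all distance-$p_i$ pairs with a bounded alphabet, which is the whole problem.

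Second, the labels $\lambda_i$ are not well defined as stated: ``the middle edge (or vertex) of a shortest path from $v$ to its distance-$p_i$ partner'' depends on the partner and on the chosen path, whereas $\lambda_i(v)$ must be a function of $v$ alone. Resolving exactly this is where all the work in the paper lies: for $u$ in level $\ell$ it attaches to $u$ the \emph{canonical} set $K_u$ of ancestors of $u$ in level $\ell-\lfloor p_i/2\rfloor$, which induces a clique by shadow-completeness of chordal graphs (K\"undgen--Pelsmajer, Lemma~\ref{shadow}); it proves that $d(u,v)=p_i$ forces $K_u$ and $K_v$ to be adjacent (odd case) or adjacent-or-intersecting (even case) via Lemma~\ref{path}; and it separates such cliques by showing that the adjacent-cliques graph of a chordal graph of clique number at most $t-1$ is $\binom{t}{2}$-colourable (Theorem~\ref{adj}, via a perfect elimination ordering). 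The even case further needs Lemma~\ref{AIC} together with an injective labelling of each neighbourhood to supply the $\Delta(G)$ factor and to rule out a walk of length $p_i-2$ between $u$ and $v$. None of this machinery appears in your sketch, and you explicitly defer the ``main obstacle''; as it stands the proposal is a correct plan for assembling the final colouring from per-distance labels, but the labels themselves --- the actual content of the theorem --- are not constructed.
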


\noindent
Of course, if $S=\{1\}$ then we have that
$\chi(\edp{G}{p_1}\cup \edp{G}{p_2}\cup \dots \cup\edp{G}{p_s})=\chi(G)=t$,
given the well known fact that chordal graphs are perfect and hence satisfy
$\chi(G)=\omega(G)$.

We obtain Theorem~\ref{main2} by partitioning the graph $G$ into levels. We
fix a vertex $x\in V(G)$ and we define the level $\ell$ as the set of
vertices having distance $\ell$ with $x$. We bound the number of colours
needed to colour one level and then give different colours to levels
which are at distance at most $p$. Apart from being natural in the context
of exact distance graphs, this simple levelling argument is regularly used
in colouring problems related to perfect graphs. (The
  real problem is, of course, in the analysis of each level.)
K\"{u}ndgen and Pelsmajer~\cite{KP08} used level partitions of chordal
graphs to find an upper bound on the number of colours needed in a
nonrepetitive colouring of a graph with tree-width $t$. Level partitions also play a key role in a series of papers of Chudnovsky,
Scott, Seymour and Spirkl which starts off by proving a conjecture
of Gy\'{a}rf\'{a}s stating that there is a function $f$ such that
$\chi(G)\le f(\omega(G))$ for every graph $G$ with no odd hole~\cite{SS16}.

Together with level partitions, the notion of an adjacent-cliques graph is
key in our proof of Theorem~\ref{main2}. For a graph $G$ and two
cliques $K$ and~$K^*$ in $G$, we say that $K$ and~$K^*$ are
\emph{adjacent} if they are disjoint and there are vertices
$x\in K$, $y\in K^*$ with $xy \in E(G)$. The \emph{adjacent-cliques graph}
$\mathit{AC}(G)$ of a graph $G$ has a vertex for each clique of~$G$, and
two vertices~$K$ and $K^*$ of $\mathit{AC}(G)$ are adjacent if and only if
their corresponding cliques in~$G$ are adjacent. We prove the following
result for the chromatic number of $\mathit{AC}(G)$ when~$G$ is chordal.

\begin{theo}\label{adj}\mbox{}\\*
  Let $G$ be a chordal graph with clique number at most $t$. We have
  $\chi(\mathit{AC}(G)) \le \binom{t+1}{2}$.
\end{theo}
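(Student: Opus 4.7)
The plan is to prove Theorem~\ref{adj} by constructing an explicit $\binom{t+1}{2}$-colouring of the cliques of $G$. I would first fix a perfect elimination ordering $v_1 < v_2 < \cdots < v_n$ of $G$; for each vertex $v$, the set $C_v = \{v\} \cup N^+(v)$ (where $N^+(v)$ denotes the later neighbours of $v$ in the PEO) is a clique of size at most $t$, and every clique $K$ of $G$ is contained in $C_{r(K)}$, where $r(K) = \min K$ in the PEO. The central structural observation is that if $K$ and $K^*$ are adjacent in $AC(G)$, they are vertex-disjoint by definition, so the pairs $\{r(K), \ell(K)\}$ and $\{r(K^*), \ell(K^*)\}$ (with $\ell(K) = \max K$) are also vertex-disjoint; this pushes the problem onto colouring pairs of extremal vertices rather than the cliques themselves.

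My colouring strategy would be to assign each clique $K$ a colour from the set of unordered pairs $(i,j)$ with $1 \le i \le j \le t$, which has exactly $\binom{t+1}{2}$ elements. The pair $(i,j)$ would encode positional information about $r(K)$ and $\ell(K)$ within a canonically chosen maximal clique $M(K) \supseteq K$ of $G$ -- for instance, the maximal clique closest to the root of a fixed rooted clique tree of $G$. Within any single maximal clique $M$, two vertex-disjoint sub-cliques cannot share both their minimum and maximum position, so the colouring automatically separates any two adjacent cliques $K,K^*$ with $M(K)=M(K^*)$.

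The hard part will be handling cliques $K, K^*$ that are adjacent in $AC(G)$ but whose canonical maximal cliques $M(K),M(K^*)$ differ: a small example like $P_4$ (whose clique tree is the path on $\{a,b\}, \{b,c\}, \{c,d\}$) shows that a naive positional encoding gives the same colour to $\{a,b\}$ and $\{c,d\}$, each of which realises positions $(1,2)$ inside its own maximal clique. To resolve this, I expect that the construction must synchronise the position labelings across neighbouring maximal cliques in the clique tree, exploiting the fact that the intersection of any two adjacent nodes of the clique tree is a clique-separator of $G$. The final verification would be a case analysis based on the relative positions of $M(K)$ and $M(K^*)$ in the clique tree, combined with the vertex-disjointness of $K$ and $K^*$ and the existence of an edge $xy \in E(G)$ with $x\in K$, $y\in K^*$. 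An alternative route is induction on the clique tree: peel off a leaf maximal clique at each step, colour the ``new'' cliques using the inductive colouring of the smaller chordal graph, and argue that the $\binom{t+1}{2}$ colours always leave enough flexibility to extend, since the new cliques pairwise share the ``separator vertex'' of the leaf and so form an independent set in $AC(G)$.
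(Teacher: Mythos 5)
Your proposal starts from the same place as the paper (a perfect elimination ordering) but then heads in a different direction, and as written it has a genuine gap: the step you yourself identify as ``the hard part'' --- separating adjacent cliques $K,K^*$ whose canonical maximal cliques $M(K)$ and $M(K^*)$ differ --- is never carried out. Your own $P_4$ example shows that the positional encoding fails without some synchronisation of labels across the clique tree, but ``I expect that the construction must synchronise the position labelings'' is a statement of intent, not a construction, and it is far from clear that such a synchronisation exists: a vertex can occupy different positions in different maximal cliques containing it, and a palette of exactly $\binom{t+1}{2}$ unordered pairs leaves no slack. The fallback inductive route is also flawed as stated: when you peel off a leaf node $M$ of the clique tree with separator $S$, the ``new'' cliques are the subcliques of $M$ that meet $M\setminus S$, and these need \emph{not} contain any separator vertex nor pairwise intersect (take $M=\{a,b,c\}$ with $S=\{c\}$; then $\{a\}$ and $\{b\}$ are both new, disjoint, and adjacent in $\mathit{AC}(G)$), so they do not form an independent set in $\mathit{AC}(G)$. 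Even if they did, you would still need to bound the number of colours forbidden by adjacencies to \emph{old} cliques in order to extend the colouring, which you do not attempt.

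For contrast, the paper's proof avoids positional encodings entirely. It greedily colours the vertices of $G$ along the perfect elimination ordering with a colouring $a$ in which each vertex avoids the colours of its predecessors and of the predecessors of its predecessors; a counting argument using the ordering shows that $\binom{t+1}{2}$ colours suffice. Each clique $K$ then receives the single colour $a(\mu(K))$, where $\mu(K)$ is the $L$-minimum vertex of $K$, and a short case analysis --- using only the fact that the earlier neighbours of any vertex form a clique --- shows that adjacent cliques receive distinct colours. If you want to salvage your approach, the lesson is to replace the pair of extremal positions by a single well-chosen vertex invariant of each clique, which is precisely the role played by $\mu(K)$ in the paper.
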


\noindent
We denote the line graph of a graph $G$ by $L(G)$. It is easy
to see that $\mathit{AC}(G)$ contains~$G$ and $L(G)^{[\natural 2]}$ as
subgraphs. Hence, Theorem~\ref{adj} tells us that for all chordal
graphs~$G$ with clique number $t$ there is a constant $c(t)$ such that
$\chi(L(G)^{[\natural 2]})\le c(t)$. Contrast this with the fact that even for
trees, $L(G)^{[\natural p]}$ can have arbitrarily large cliques if $p$ is
odd (consider stars and subdivided stars). Whether or not there are similar constant upper
bounds on $\chi(L(G)^{[\natural p]})$ for all even $p\ge4$, we leave as an open problem. Also, in light of known results about the strong chromatic index~\cite{DGS15}, we conjecture that for every $k$ there is a constant $c(k)$, such that $\chi(L(G)^{[\natural 2]})\le c(k)$ for every $k$ degenerate graph $G$. 

Let $H$ be a subgraph of a graph $G$. We say $H$ is an \emph{isometric subgraph} of $G$ if $d_H(u,v)=d_G(u,v)$ for every $u,v\in V(H)$. Note that if $H$ is an isometric subgraph of $G$, then $\chi(H^{[\natural p]}) \le \chi(G^{[\natural p]})$ for every positive
  integer~$p$. Thus, if we could show that for every graph $H$ of tree-width $t$ there is a chordal graph~$G$ of tree-width $t$ which contains $H$ as an isometric subgraph, then we could extend Theorems~\ref{main1} and~\ref{main2} to all graphs of tree-width $t$. While we cannot do this we prove the following.

\begin{prop}\label{genus}\mbox{}\\*
  Let $H$ be a graph with genus $g \ge 0$. There is a graph $G$ which is a triangulation of genus $g$ and contains $H$ as an isometric subgraph.
\end{prop}

\noindent
While there are results showing, for instance, that every graph is an isometric subgraph of a vertex-transitive graph~\cite{DS08}, Proposition~\ref{genus} is new as far as we are aware.

By Proposition~\ref{genus} we have, for instance, that an upper bound on $\chi(G^{[\natural p]})$ for all maximal planar graphs implies an upper bound on $\chi(G^{[\natural p]})$ for all planar graphs. In~\cite{vdHetal}, Van den Heuvel \emph{et al.} prove that $\chi(G^{[\natural 3]})\le 105$ for all planar graphs $G$. While this represents a major improvement on previous upper bounds (a bound of $5 \cdot 2^{20,971,522}$ is implied by~\cite{NOdM12}, and one of $70\cdot 2^{70}$ by~\cite{S16}), it is still far from the lower bound of 7, given also in~\cite{vdHetal}. Proposition~\ref{genus} opens the gates for classical techniques for colouring triangulations of planar graphs to be applied on this problem.

The rest of the paper is organised as follows. In the next section we study
the properties of level partitions of chordal graphs which will be
essential for the proof of Theorem~\ref{main2}. In Section~\ref{sec3} we
prove Theorem~\ref{adj}, and in Section~\ref{sec4} we complete the proof of
Theorem~\ref{main2}. In Section~\ref{sec5} we prove
Proposition~\ref{genus}. We conclude with a short discussion on lower bounds and by mentioning some open problems.

\section{Level partitions of chordal graphs}\label{sec2}

Let $G$ be a graph and $x$ be a fixed vertex of $G$. For any positive
integer $\ell$, set $N^\ell(x)=\{ v \in V(G) \mid d(v,x)= \ell \}$. We call
$N^\ell(x)$ the \emph{$\ell$-th level} of~$G$ with respect to~$x$, and if
we set $N^0(x)=\{ x\}$ we get that these levels partition the vertices of the connected
component of $G$ containing $x$. We also set
$N^{<\ell}(x)=\bigcup_{i<\ell}N^i(x)$ and
$N^{>\ell}(x)=\bigcup_{i>\ell}N^i(x)$.

Let $G_\ell$, $G_{<\ell}$ and $G_{>\ell}$ be the graphs induced by $N^\ell(x)$,
$N^{<\ell}(x)$ and $N^{>\ell}(x)$, respectively. Define the
\emph{$\ell$-shadow} of a subgraph~$H$ of~$G$ as the set of
vertices in~$N^\ell(x)$ which have a neighbour in~$V(H)$. We say
that~$G$ is \emph{shadow complete} (with respect to~$x$) if for every
non-negative integer $\ell$, the $\ell$-shadow of every connected component
of $G_{>\ell}$ induces a complete graph.

Using a well-known theorem of Dirac~\cite{D61} which characterises
chordal graphs in terms of their minimal vertex cut sets, K\"{u}ndgen and
Pelsmajer~\cite{KP08} proved that connected chordal graphs are shadow
complete with respect to any vertex.

\begin{lem}[K\"{u}ndgen and Pelsmajer~\cite{KP08}]\label{shadow}\mbox{}\\*
  Let $G$ be a connected chordal graph with clique number $t\ge 2$ and let
  $x$ be any vertex in $V(G)$. Then $G$ is shadow complete with respect to
  $x$ and every $G_\ell$ is a chordal graph with clique number strictly
  smaller than $t$.
\end{lem}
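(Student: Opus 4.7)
My plan is to prove all three conclusions together, exploiting chordality of $G$ (every cycle of length at least four has a chord) and the BFS layering $N^0(x),N^1(x),\dots$. For shadow completeness, fix a connected component $C$ of $G_{>\ell}$ and let $S$ be its $\ell$-shadow. I would first observe that $S$ separates $V(C)$ from $V(G)\setminus(V(C)\cup S)$ in $G$: since $V(C)\subseteq N^{>\ell}(x)$, any edge leaving $V(C)$ either stays in $V(C)$ or hits $N^\ell(x)$, and in the latter case lands in $S$ by the definition of the shadow. To show $S$ is a clique, take distinct $u,v\in S$ and suppose, toward a contradiction, $uv\notin E(G)$. I would produce a cycle of length at least four in $G$ by concatenating two induced $u$-$v$ paths: $P_1$ inside $G[V(C)\cup\{u,v\}]$, which exists because $V(C)$ is connected and both $u,v$ have neighbours in $V(C)$; and $P_2$ inside $G[N^{<\ell}(x)\cup\{u,v\}]$, which exists because $u,v$ have BFS predecessors in $N^{\ell-1}(x)$ and $G[N^{<\ell}(x)]$ is connected (every vertex in it has a shortest path to $x$ staying in lower levels). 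Chordality demands a chord, but any chord internal to an induced $P_i$ is forbidden, any chord joining the interiors of $P_1$ and $P_2$ is blocked by the separator $S$, and any chord incident to $u$ or $v$ different from $uv$ would shorten one of the $P_i$, while the chord $uv$ itself was excluded. This contradiction forces $uv\in E(G)$.

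Chordality of $G_\ell$ is immediate, as an induced subgraph of $G$. The strict decrease of the clique number I would deduce from the following statement, proved by induction on $|K|$: if $\ell\ge 1$ and $K\subseteq N^\ell(x)$ is a clique of $G$, then there exists $w\in N^{\ell-1}(x)$ adjacent to every vertex of $K$. Granted this, $K\cup\{w\}$ is a clique of size $|K|+1$ in $G$, so $|K|+1\le\omega(G)=t$; the case $\ell=0$ is trivial since $G_0$ has a single vertex and $t\ge 2$. The base case $|K|=1$ is a BFS predecessor. For the inductive step, write $K=K_0\cup\{v^*\}$ with $|K_0|\ge 1$ and apply the inductive hypothesis to $K_0$ to get $w_0\in N^{\ell-1}(x)$ adjacent to all of $K_0$; pick any BFS predecessor $w^*$ of $v^*$. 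By shadow completeness, applied at level $\ell-1$ to the component of $G_{>\ell-1}$ that contains $K$, the $(\ell-1)$-shadow is a clique containing both $w_0$ and $w^*$, so $w_0w^*\in E(G)$. For each $v_i\in K_0$ the four-cycle $v_i\,w_0\,w^*\,v^*\,v_i$ must then have a chord, which can only be $w_0v^*$ or $w^*v_i$. If $w_0v^*$ is ever an edge then $w_0$ is adjacent to every vertex of $K$; otherwise $w^*v_i\in E(G)$ for every $v_i\in K_0$, and $w^*$ is adjacent to every vertex of $K$.

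The main technical obstacle is the shadow-completeness step, where one has to verify that none of the possible chord-types of the concatenated cycle $P_1\cup P_2$ is actually admissible; this is why it matters both that $P_1,P_2$ are chosen as induced paths and that $S$ is a true vertex separator of the two sides. Once shadow completeness is in hand, the strict clique-number bound falls out by re-using it at level $\ell-1$ inside the induction above.
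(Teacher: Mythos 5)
Your argument is correct, but note that the paper does not prove this lemma at all: it is quoted from K\"undgen and Pelsmajer \cite{KP08}, with the remark that their proof rests on Dirac's theorem that every minimal vertex cut set of a chordal graph is a clique. What you have done is, in effect, unpack that citation into a self-contained proof. Your shadow-completeness step is a direct re-derivation of Dirac's lemma in this special setting: the shadow $S$ of a component $C$ of $G_{>\ell}$ separates $C$ from the rest, every $u\in S$ has a neighbour both in $C$ and (via its BFS predecessor) in the connected graph $G_{<\ell+1}\setminus S$, and the two induced $u$--$v$ paths on either side concatenate into a chordless cycle of length at least $4$ unless $uv\in E(G)$; your case analysis of the possible chords is complete, using that no edge of $G$ joins $N^{>\ell}(x)$ to $N^{<\ell}(x)$. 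The clique-number statement does not follow from shadow completeness alone, and your induction producing a common neighbour $w\in N^{\ell-1}(x)$ of an entire clique $K\subseteq N^{\ell}(x)$ (via the four-cycle $v_i\,w_0\,w^*\,v^*$, which must carry the chord $w_0v^*$ or $w^*v_i$) is a correct way to supply the missing half. Two trivial cases deserve a sentence each: $\ell=0$, where the shadow lies in the single-vertex set $\{x\}$ and $G_0$ has clique number $1<t$; and $w_0=w^*$ in your inductive step, where $w_0$ already serves as the common neighbour. Neither affects the validity of the argument. Compared with invoking Dirac's theorem as a black box, your route is longer but elementary and makes explicit exactly which separator is being shown to be a clique.
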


\noindent
Before we start to see some implications of this lemma, let us state one
additional definition. We say that a vertex $v \in N^\ell(x)$, is an
\emph{ancestor} (with respect to~$x$) of a vertex $u \in N^m(x)$,
$\ell<m$, if there is a path between $u$ and $v$ of length $m-\ell$.
Clearly, any such path has exactly one vertex in each level $N^\ell(x),
N^{\ell+1}(x), \dots , N^m(x)$.

The following result follows directly from Lemma~\ref{shadow}.

\begin{coro}\label{desce}\mbox{}\\*
  Let $G$ be a connected chordal graph, $x \in
  V(G)$, and $u, v \in N^\ell(x)$ for some positive integer~$\ell$. If $u$
  and $v$ are both ancestors of some $y \in N^{>\ell}(x)$, then $u$ and $v$
  are neighbours.
\end{coro}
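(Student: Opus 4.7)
The plan is to reduce everything to Lemma~\ref{shadow} by exhibiting a single connected component of $G_{>\ell}$ whose $\ell$-shadow contains both $u$ and~$v$; shadow completeness will then force $uv\in E(G)$.

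First I would unpack the ancestor hypothesis. Let $m>\ell$ be such that $y\in N^m(x)$. Since $u$ is an ancestor of $y$, there is a path $P_u=u_0u_1\cdots u_{m-\ell}$ with $u_0=u$, $u_{m-\ell}=y$, and $u_i\in N^{\ell+i}(x)$ for every $i$; similarly there is such a path $P_v$ from $v$ to $y$. The internal vertices of $P_u$ (those with $i\ge 1$) lie entirely in $N^{>\ell}(x)$ and, being consecutive along a path, induce a connected subgraph of $G_{>\ell}$ that contains $y$; the same is true for the internal vertices of $P_v$.

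Now let $C$ be the connected component of $G_{>\ell}$ that contains~$y$. By the previous paragraph, all internal vertices of both $P_u$ and $P_v$ lie in~$C$. In particular $u_1\in C$ is a neighbour of~$u$ in~$G$, and the corresponding vertex from $P_v$ is a neighbour of~$v$ in~$C$. Thus both $u$ and $v$ belong to the $\ell$-shadow of $C$. By Lemma~\ref{shadow}, $G$ is shadow complete with respect to~$x$, so this shadow induces a clique, and in particular $u$ and~$v$ are adjacent.

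I do not anticipate any substantive obstacle: the only content of the argument beyond Lemma~\ref{shadow} is the mild observation that two ancestor paths ending at a common descendant have all their internal vertices in the same connected component of $G_{>\ell}$, which brings the configuration squarely within the scope of shadow completeness.
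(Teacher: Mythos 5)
Your argument is correct and is precisely the one the paper intends: the paper states that the corollary ``follows directly from Lemma~\ref{shadow}'' without spelling out the details, and your observation that the two ancestor paths place $u$ and $v$ in the $\ell$-shadow of the common component of $G_{>\ell}$ containing $y$ is exactly the missing routine step. Nothing to add.
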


\noindent
With a bit more care we can prove that if two vertices are at the same
level $\ell$ and are at distance $p$, then their ancestors at level
$\ell-\lfloor p/2 \rfloor$ form cliques which either intersect or are
adjacent.

\begin{lem}\label{path}\mbox{}\\*
  Let $G$ be a connected chordal graph,
  $x \in V(G)$, and $u, v \in N^\ell(x)$ for some positive integer~$\ell$.
  Suppose $d(u,v)=p \ge 2$ and let $K_u, K_v$ be the (complete) graphs induced by the ancestors of
  $u$ and $v$ in $N^{\ell-\lfloor p/2 \rfloor}(x)$, respectively. We have
  that

  \smallskip
  \qitem{(a)} if $p$ is odd, then $K_u$ and $K_v$ are adjacent;

  \smallskip
  \qitem{(b)} if $p$ is even, then $K_u$ and $K_v$ are adjacent or
  $K_u \cap K_v \ne \varnothing$.
\end{lem}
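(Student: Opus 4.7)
My plan is to fix a shortest $u$-$v$ path $P = w_0 w_1 \cdots w_p$ (with $w_0 = u$ and $w_p = v$) and exploit the simple but crucial observation that for every vertex $w_i$ on $P$, if $m = d(w_i, x)$, then the two walks $u$-$w_i$-$x$ and $v$-$w_i$-$x$ give $i + m \ge \ell$ and $(p-i) + m \ge \ell$; summing yields $m \ge \ell - \lfloor p/2 \rfloor$. Write $k = \lfloor p/2 \rfloor$, so $P$ stays inside levels $\ge \ell - k$, which is also the level hosting $K_u$ and $K_v$. I will also use, in the odd case, the automatic fact that $K_u \cap K_v = \varnothing$ when $p = 2k+1$: a common vertex $a \in K_u \cap K_v$ would give $d(u,v) \le d(u,a) + d(a,v) = 2k < p$, so for (a) it suffices to exhibit a single edge between $K_u$ and $K_v$.

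The analysis splits on whether $P$ actually reaches level $\ell - k$. If it does not, then $P$ lies in $G_{>\ell - k}$, so $u$ and $v$ share a component $C$ of $G_{>\ell - k}$; by Lemma~\ref{shadow} the $(\ell-k)$-shadow of $C$ is a clique. Every ancestor of $u$ (resp.\ $v$) at level $\ell - k$ has a neighbour one level higher, itself an ancestor at level $\ell - k + 1$, hence inside $C$; so both $K_u$ and $K_v$ sit inside this single clique, settling (a) and (b) at once.

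If $P$ does reach level $\ell - k$ at some index $i$, the inequalities above force $i \in \{k, k+1\}$ for odd $p$ and $i = k$ for even $p$. In the even case $w_k$ is an ancestor of both $u$ and $v$, so $K_u \cap K_v \ne \varnothing$ immediately. In the odd case, if both $w_k$ and $w_{k+1}$ are at level $\ell - k$, then $w_k \in K_u$, $w_{k+1} \in K_v$, and the edge $w_k w_{k+1}$ of $P$ witnesses adjacency. The one sub-case I expect to be the main obstacle is odd $p$ with exactly one of $w_k, w_{k+1}$ at level $\ell - k$; by symmetry assume it is $w_k$. Then no $w_i$ with $i > k$ can reach level $\ell - k$, so the tail $w_{k+1}\cdots w_p$ lies inside $G_{>\ell - k}$, placing $w_{k+1}$ and $v$ in a common component $C_v$. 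Applying Lemma~\ref{shadow} to $C_v$ yields a clique at level $\ell - k$ that contains $K_v$ (by the ancestor argument above) and also $w_k$ (since $w_k$ neighbours $w_{k+1} \in C_v$); as $w_k \in K_u$ and $K_u \cap K_v = \varnothing$, this delivers the required edge between $K_u$ and $K_v$, and the symmetric sub-case follows by interchanging the roles of $u$ and $v$.
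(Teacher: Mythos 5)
Your proof is correct, and it rests on the same two ingredients as the paper's: the observation that a shortest $u$-$v$ path cannot descend below level $\ell-k$ (where $k=\lfloor p/2\rfloor$), and shadow completeness via Lemma~\ref{shadow}. The difference is in how the cases are cut, and it is not purely cosmetic. The paper splits on whether $u$ and $v$ lie in the same component of $G_{>\ell-k}$; in the different-components case it asserts that every shortest path meets $G_{\ell-k}$ in exactly one vertex for even $p$ and exactly two for odd $p$, and reads off a common ancestor or an adjacency directly. You instead split on whether a fixed shortest path $P$ meets level $\ell-k$ at all, and for odd $p$ you explicitly treat the sub-case where $P$ meets that level in only one vertex, say $w_k$: you apply shadow completeness to the component of the tail $w_{k+1}\cdots w_p$ to place $w_k$ together with all of $K_v$ inside one clique, and then use the automatic disjointness $K_u\cap K_v=\varnothing$ to extract an edge. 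That sub-case is exactly the one the paper's ``exactly two vertices'' claim glosses over --- a shortest path between vertices in different components of $G_{>\ell-k}$ can a priori meet $G_{\ell-k}$ in a single vertex, since only the head of the path is forced to descend strictly --- so your argument is, if anything, more careful at that point. Two small housekeeping items: state explicitly that $\ell\ge k$ (immediate from $p=d(u,v)\le d(u,x)+d(x,v)=2\ell$), so that the level $\ell-k$ exists and $K_u,K_v$ are nonempty; and in your first case note that for part~(b) the containment of $K_u\cup K_v$ in a single clique yields ``adjacent or intersecting'' because adjacency of cliques requires disjointness.
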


\begin{proof}
  Let $k=\lfloor p/2 \rfloor$, and note that we must have $\ell \ge k$ as
  otherwise there would be a walk from $u$ to $v$ that goes through $x$ and
  has length $2l<2k\le p$, which would contradict $d(u,v)=p$. We prove
  (a) and (b) simultaneously by considering two possibilities for $u$ and
  $v$.

  We first consider the case in which~$u$ and~$v$ are in different
  components of~$G_{>\ell-k}$. In this case it is clear that every path of
  length $p$ joining $u$ and $v$ must contain a vertex from~$G_{\ell-k}$
  (and no vertices in $G_{<\ell-k}$). It is also easy to see that if $p$ is
  even, then every path of length~$p$ joining~$u$ and $v$ must have exactly
  one vertex in~$G_{\ell-k}$. Since $d(u,v)=p$, this means that $K_u \cap
  K_v \ne \varnothing$. If $p$ is odd, then every path of length $p$
  joining~$u$ and $v$ must have exactly two vertices in~$G_{\ell-k}$. This
  implies that $K_u$ and $K_v$ are adjacent.

  We are now left to consider the case in which $u$ and $v$ are in the same
  connected component~$C$ of~$G_{>\ell-k}$. Let $z\in G_{\ell-k+1}$ be an
  ancestor of $u$ and let $z' \in G_{\ell-k+1}$ be an ancestor of~$v$.
  Clearly, $z$ and $z'$ belong to $C$. We know by Lemma~\ref{shadow}
  that since $G$ is chordal it is shadow complete, and so the neighbours of
  $z$ and $z'$ in $G_{\ell-k}$ form a clique. This means that either $K_u$
  and $K_v$ are adjacent or $K_u \cap K_v \ne \varnothing$. However, if $p$
  is odd we cannot have $K_u \cap K_v \ne \varnothing$.
\end{proof}

\section{Adjacent-cliques graphs}\label{sec3}

In this section we prove Theorem~\ref{adj}. In order to prove this result
we need to recall a specific characterisation of chordal graphs.

A \emph{perfect elimination ordering} of a graph $G$ is a linear
ordering~$L$ of $V(G)$ such that, for every vertex $v \in V(G)$, the
neighbours of $v$ which are smaller than $v$ in $L$ form a clique. The
following classical result is proved in \cite[Section~7]{FG65}.

\begin{prop}[Fulkerson and Gross~\cite{FG65}]\label{peo}\mbox{}\\*
  A graph is chordal if and only if it has a perfect elimination ordering.
\end{prop}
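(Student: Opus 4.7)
The plan is to prove each direction separately. For the direction ``PEO implies chordal,'' I assume that $G$ has a perfect elimination ordering $L$ and, for a contradiction, that $G$ contains an induced cycle $C$ of length at least four. Let $v$ be the vertex of $C$ that is largest in $L$, and let $u, w$ be its two neighbours on $C$. Both $u$ and $w$ precede $v$ in $L$, so by the PEO property they form a clique; in particular $uw$ is an edge. But $u$ and $w$ are non-adjacent on $C$ (since $C$ has length at least four), so $uw$ is a chord of $C$, contradicting that $C$ is induced.

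For the converse, I reduce to the key claim that \emph{every non-empty chordal graph contains a simplicial vertex}, where a vertex is simplicial if its neighbourhood induces a clique. Given the claim, one builds a PEO by induction on $|V(G)|$: pick a simplicial vertex $v$, apply induction to the chordal graph $G-v$ (chordal because induced subgraphs of chordal graphs are chordal) to order the remaining vertices, and place $v$ last. This is a PEO of $G$: for any $u\neq v$ the smaller neighbours of $u$ in the new ordering are exactly those in the inductive PEO of $G-v$ (since $v$ comes after $u$), and hence form a clique; and for $v$ itself the smaller neighbours are all of $N(v)$, a clique by simpliciality.

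The main obstacle is proving the claim, for which I would actually establish the stronger statement that every chordal graph which is not complete has \emph{two non-adjacent} simplicial vertices, by a second induction on $|V(G)|$. If $G$ is complete, any vertex is simplicial and the extra conclusion is vacuous. Otherwise, fix non-adjacent $a, b$ and a minimal $a$--$b$ separator $S$. The crucial ingredient is Dirac's classical theorem (the one alluded to before Lemma~\ref{shadow}) stating that $S$ induces a clique: for any $u, v \in S$, minimality of $S$ forces $u$ and $v$ both to have neighbours in each of the components $A, B$ of $G-S$ containing $a$ and $b$; taking shortest paths from $u$ to $v$ through $A$ and through $B$ produces a cycle of length at least four whose only possible chord is $uv$, so chordality forces $uv \in E(G)$. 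Now apply the inductive hypothesis to $G[A \cup S]$: if this graph is complete then $a$ is simplicial in $G$; otherwise it has two non-adjacent simplicial vertices, at least one of which must lie in $A$ since $S$ is a clique, and any such vertex is simplicial in $G$ too because its $G$-neighbours all lie in $A \cup S$ (as $A$ is a component of $G-S$). The symmetric argument applied to $G[B \cup S]$ produces a second simplicial vertex of $G$, lying in $B$, and the two simplicial vertices thus obtained are non-adjacent because they are separated by $S$.
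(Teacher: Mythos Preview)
Your argument is correct and is, in fact, the standard textbook proof: one direction via the maximal vertex on a putative induced cycle, the other via Dirac's lemma on minimal separators and the ``two non-adjacent simplicial vertices'' strengthening. There is nothing to compare against, however, because the paper does not supply its own proof of Proposition~\ref{peo}; it simply quotes the result and refers the reader to~\cite[Section~7]{FG65}. So your write-up goes well beyond what the paper does here.

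One small remark on presentation: in your sketch of Dirac's separator lemma you say ``taking shortest paths from $u$ to $v$ through $A$ and through $B$ produces a cycle of length at least four whose only possible chord is $uv$.'' This is right, but it is worth making explicit why those shortest paths are induced (any chord would shorten them) and why no chord can run between the two paths (because $S$ separates $A$ from $B$); otherwise a reader might wonder whether other chords could rescue the cycle.
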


\begin{proof}[Proof of Theorem~\ref{adj}] 
  By Proposition~\ref{peo} we know $G$ has a perfect elimination ordering.
  We fix one such ordering $L$. We say a vertex $u$ is a
  \emph{predecessor} of a vertex $v$ if $uv \in E(G)$ and $u <_L v$.
  Moving along the ordering $L$, we colour the vertices of $G$ in
  the following way. A vertex~$v$ gets a colour~$a(v)$ which is different
  from $a(u)$ if $u$ is a predecessor of $v$ or $u$ is a predecessor
  of a predecessor of $v$. Since the clique number of $G$ is at most $t$
  and since~$L$ is a perfect elimination ordering, each vertex has at most
  $t-1$ predecessors. Moreover, by choice of~$L$ we have that if $v$ has
  $r\le t-1$ predecessors, the largest (with respect to $L$) of its
  predecessors has at most $t-r$ predecessors which are not already
  predecessors of $v$; the second largest predecessor of~$v$ has at most
  $t-(r-1)$ predecessors which are not already predecessors of $v$, and so
  on. Therefore, for any vertex $v$ the set of predecessors and
  predecessors of a predecessor of $v$ has size at most $r+(t-r)+(t-(r-1))+
  \dots +(t-1)\le (t-1)+1+2+ \dots + t-1=\binom{t+1}{2}-1$. And so, the
  colouring $a$ uses at most $\binom{t+1}{2}$ colours.

  We define a colouring $c$ on the vertices of $\mathit{AC}(G)$ in the
  following way. For every vertex~$K$ in $\mathit{AC}(G)$, (i.e., for every clique $K$ in $G$) we set $\mu (K)$ as the smallest vertex
  of $K$ with respect to $L$. Every vertex $K$ is assigned the colour
  $c(K)=a(\mu (K))$. To prove the theorem, it suffices to show that $c$ is a proper colouring of
  $\mathit{AC}(G)$.

  Let $K$ and $K^*$ be adjacent vertices in $\mathit{AC}(G)$. We must show that we have
  $a(\mu(K))\ne a(\mu(K^*))$. Let $u,u' \in K$ and $v,v' \in K^*$ be
  vertices of~$G$ such that $u=\mu(K)$, $v=\mu (K^*)$ and $u'v'\in E(G)$.
  Without loss of generality we assume that $u' <_L v'$. If $v=v'$, we have
  that $u'v\in E(G)$. Otherwise, we have that both $u'$ and $v$ are
  predecessors of $v'$. Since $L$ is a perfect elimination ordering, we
  also obtain $u'v \in E(G)$.

  Since $a$ is a proper colouring of $G$, if we have $u=u'$ we
  immediately get that $a(\mu (K))=a(u) \ne a(v) =a(\mu (K^*))$ as desired.
  So assume $u \ne u'$. If $v<_L u'$, we have that
  both $u$ and~$v$ are predecessors of $u'$, and so $uv \in E(G)$. This
  again gives us that $a(\mu (K)) \ne a(\mu (K^*))$. Otherwise, if $u'<_L
  v$ we have that $u'$ is a predecessor of~$v$. Since $u$ is a predecessor
  of~$u'$ we also obtain that $a(\mu (K))=a(u) \ne a(v) =a(\mu (K^*))$ by
  definition of~$a$.
\end{proof}

\noindent
For later use, we note a property of the colouring
$c$ we defined in the previous proof.

\begin{lem}\label{AIC}\mbox{}\\*
  Let $G$ be a chordal graph, and $K, K^*$ clique subgraphs of $G$. Let $c$ and $\mu$ be as defined in the proof of
  Theorem~\ref{adj}.  If  $K\cap K^*\ne \varnothing$ and $c(K)=c(K^*)$, then we have $\mu(K)=\mu(K^*)$.
\end{lem}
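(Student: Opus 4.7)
The plan is to argue by contradiction. Suppose $K\cap K^*\ne\varnothing$, pick some $w\in K\cap K^*$, and set $u=\mu(K)$, $v=\mu(K^*)$. By the minimality defining $\mu$, we have $u\le_L w$ and $v\le_L w$. Assume, for contradiction, that $u\ne v$; by symmetry take $u<_L v$. I will show that in every remaining case, $u$ turns out to be a predecessor of $v$, which by definition of the colouring $a$ in the proof of Theorem~\ref{adj} forces $a(u)\ne a(v)$, contradicting $c(z)=a(\mu(K))=a(\mu(K^*))=c(z^*)$.

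First I would rule out $u=w$: in that case $u\in K^*$, so $v\le_L u$, clashing with $u<_L v$. Hence $u<_L w$, and since $u,w\in K$ with $K$ a clique, $u$ is a predecessor of $w$. Next I split on whether $v=w$ or $v<_L w$. If $v=w$, then $u$ is already a predecessor of $v$ and we are done. If $v<_L w$, then $v,w\in K^*$ gives $vw\in E(G)$, so $v$ is also a predecessor of $w$. Since $L$ is a perfect elimination ordering (Proposition~\ref{peo}), the predecessors of $w$ form a clique, hence $uv\in E(G)$; together with $u<_L v$ this makes $u$ a predecessor of $v$, as required.

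In each case we reach $a(u)\ne a(v)$ and thus a contradiction with $c(z)=c(z^*)$, so $\mu(K)=\mu(K^*)$. The argument does not even use the stronger ``predecessor of a predecessor'' condition built into $a$: the basic fact that $a$ properly colours $G$, together with the perfect elimination property, suffices. There is no real obstacle here; the lemma is essentially a bookkeeping consequence of how $\mu$ and $a$ were set up in the proof of Theorem~\ref{adj}, and the only point that needs a little care is handling the boundary cases $u=w$ and $v=w$ before invoking the perfect elimination property.
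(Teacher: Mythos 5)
Your proof is correct and follows essentially the same route as the paper's: both arguments reduce to showing $\mu(K)\mu(K^*)\in E(G)$ by observing that $\mu(K)$ and $\mu(K^*)$ are both predecessors of a common vertex of $K\cap K^*$ and then invoking the perfect elimination property at that vertex. The only cosmetic difference is that you take an arbitrary $w\in K\cap K^*$ and handle the boundary cases $u=w$, $v=w$ explicitly, while the paper takes the $L$-minimum vertex of $K\cap K^*$ and disposes of those cases up front.
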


\begin{proof}
  We prove that if we have $K\cap K^*\ne \varnothing$ and $\mu(K) \ne \mu(K^*)$, then we have $c(K) \ne c(K^*)$. We do this by proving  that
  $\mu(K)$ and $\mu (K^*)$ are adjacent in $G$. Since $a$ is a proper
  colouring of $G$, this will tell us that
  $c(K)=a(\mu (K)) \ne a(\mu (K^*))=c(K^*)$ which gives us the result.

  If $\mu(K)$ and $\mu (K^*)$ are not adjacent, we have that neither
  of $\mu(K),\mu (K^*)$ belong to $K \cap K^*$. Therefore, the
  minimum vertex $v$ in $K \cap K^*$ with respect to $L$ is adjacent
  to~$\mu(K)$ and $\mu (K^*)$, and $\mu(K),\mu (K^*)$ are smaller
  than $v$ in $L$. This contradicts the choice of~$L$, since 
 the neighbours of $v$  smaller than $v$ in $L$ must form a clique,
  and thus be pairwise adjacent.
\end{proof}

\section{Exact distance graphs of chordal graphs}\label{sec4}

Theorem~\ref{main2} will follow from the next lemma.

\begin{lem}\label{level}\mbox{}\\*
  Let~$G$ be a connected chordal graph with clique number $t\ge 2$, let~$x$
  be a vertex in~$G$, and $p\ge2$ an integer. For any non-negative
  integer~$\ell$, we have that

  \smallskip
  \qitem{(a)} if $p$ is odd, then there is a colouring $h$ of
  $N^{\ell}(x)$ using at most $\binom{t}{2}$ colours such that if
  $u,v \in N^{\ell}(x)$ satisfy $uv \in E(G^{[\natural p]})$, then
  $h(u) \neq h(v)$;

  \smallskip
  \qitem{(b)} if $p$ is even, then there is a colouring $h'$ of
  $N^{\ell}(x)$ using at most $\binom{t}{2}\cdot \Delta(G)$ colours such
  that if $u,v \in N^{\ell}(x)$ satisfy $uv \in E(G^{[\natural p]})$, then
  $h'(u) \neq h'(v)$.
\end{lem}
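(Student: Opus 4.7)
My plan is to colour each $u \in N^\ell(x)$ based on an ancestor clique. Set $k = \lfloor p/2 \rfloor$. If $\ell < k$, then any two vertices in $N^\ell(x)$ lie at distance at most $2\ell < p$, so a single colour suffices; assume from now on that $\ell \ge k$. For each $u \in N^\ell(x)$, let $K_u$ be the set of ancestors of $u$ in $N^{\ell-k}(x)$; by Corollary~\ref{desce} this is a clique of $G$. By Lemma~\ref{shadow}, the induced subgraph $G_{\ell-k}$ is chordal with clique number at most $t-1$, so Theorem~\ref{adj} supplies a proper colouring $c$ of $\mathit{AC}(G_{\ell-k})$ using at most $\binom{t}{2}$ colours. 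Let $L$ and $\mu$ be the perfect elimination ordering and the minimum-vertex map from the proof of Theorem~\ref{adj}.

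For part~(a), with $p$ odd, define $h(u) = c(z_{K_u})$, where $z_{K_u}$ is the vertex of $\mathit{AC}(G_{\ell-k})$ corresponding to $K_u$. If $u,v \in N^\ell(x)$ are at distance $p$, then Lemma~\ref{path}(a) gives that $K_u$ and $K_v$ are adjacent cliques, and since both lie in $N^{\ell-k}(x)$ this adjacency also holds in $G_{\ell-k}$; hence $h(u) \ne h(v)$.

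For part~(b), with $p$ even, Lemma~\ref{path}(b) allows $K_u \cap K_v \neq \varnothing$, in which case $c$ alone may fail to separate $u$ and $v$. To remedy this, I append a second coordinate of size at most $\Delta(G)$. For each $w \in N^{\ell-k}(x)$, fix an injection $\lambda_w : N_G(w) \to \{1, \dots, \Delta(G)\}$. For each $u \in N^\ell(x)$, set $w_u = \mu(K_u)$ and let $a_u$ be the $L$-minimum vertex in $N^{\ell-k+1}(x) \cap N_G(w_u)$ satisfying $d_G(a_u, u) = k-1$; this set is non-empty because the second vertex of any shortest $w_u$--$u$ path (which lies in $N^{\ell-k+1}(x)$ by a level-difference argument) belongs to it. Define $h'(u) = (c(z_{K_u}), \lambda_{w_u}(a_u))$, taking values in a set of size at most $\binom{t}{2} \cdot \Delta(G)$.

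Suppose $u,v \in N^\ell(x)$ are at distance $p$ and $h'(u) = h'(v)$. If $K_u$ and $K_v$ were adjacent, the first coordinates would already differ, contradiction; so $K_u \cap K_v \neq \varnothing$ by Lemma~\ref{path}(b). Combined with $c(z_{K_u}) = c(z_{K_v})$, Lemma~\ref{AIC} forces $w_u = w_v = w$. The equality of second coordinates and injectivity of $\lambda_w$ then give $a_u = a_v$, so $d_G(u,v) \le d_G(u,a_u) + d_G(a_v,v) = 2(k-1) = p-2 < p$, the desired contradiction. The main obstacle is the even case: identifying a canonical near-ancestor that provably cannot be shared by two vertices at distance exactly $p$, and this is precisely where Lemma~\ref{AIC} does the crucial work, by reducing the intersection case to a comparison at a single vertex $w$ where the injective neighbour-labelling can do its job.
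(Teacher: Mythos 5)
Your proof is correct and follows essentially the same route as the paper's: the adjacent-cliques colouring of $G_{\ell-k}$ handles the odd case and the disjoint-cliques subcase of the even case, while Lemma~\ref{AIC} plus an injective labelling of the neighbours of $\mu(K_u)$ handles the intersecting case (your canonical $L$-minimum choice of $a_u$ plays exactly the role of the paper's arbitrarily chosen $\sigma(u)$, and the distance bound $2(k-1)=p-2$ is the same contradiction). The only cosmetic difference is that you dispose of the case $\ell<k$ with a single colour, where the paper observes the colouring condition is vacuous there.
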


\begin{proof}
  Let $k= \lfloor p/2 \rfloor$ and note, just as in the proof of
  Lemma~\ref{path}, that if $u,v \in N^{\ell}(x)$ satisfy
  $uv \in E(G^{[\natural p]})$, then we must have $\ell\ge k$. By Lemma~\ref{shadow} we know that $G_{\ell-k}$ is a chordal
  graph with clique number at most $t-1$. By Theorem~\ref{adj} we know that
  there is a proper colouring $c$ of the vertices of
  $\mathit{AC}(G_{\ell-k})$ which uses at most $\binom{t}{2}$ colours.

   For each vertex $y \in N^{\ell}(x)$ we consider the set of vertices
  $K_y \subseteq N^{\ell-k}(x)$ which are ancestors of $y$. By
  Corollary~\ref{desce} we have that $K_y$  is a vertex of
  $\mathit{AC}(G_{\ell-k})$. 

(a)\quad Define the colouring $h$ by assigning
  $h(y)=c(K_y)$ to every vertex $y \in N^{\ell}(x)$. Let
  $u,v \in N^{\ell}(x)$ be such that $uv \in E(G^{[\natural p]})$. By
  Lemma~\ref{path}\,(a) we have $K_uK_v \in E(\mathit{AC}(G_{\ell-k}))$. Therefore, we have
  $h(u)=c(K_u)\ne c(K_v)=h(v)$, as desired.

  (b)\quad For each  $w \in N^{\ell-k}(x)$ we choose
  an injective function $b_w:N(w)\to\{1, \dots, \Delta(G)\}$. For every
  vertex $y \in N^\ell(x)$ we choose an arbitrary vertex $\sigma(y)$
  from $N^{k-1}(y)\cap N(\mu(K_y))$. The colouring~$h'$ assigns
  $h'(y)=(c(K_y),b_{\mu(K_y)}(\sigma(y)))$ to every vertex $y\in
  N^\ell(x)$. Clearly $h'$ uses at most $\binom{t}{2}\cdot \Delta(G)$
  colours.

  Let $u,v \in N^{\ell}(x)$ be such that $uv \in E(G^{[\natural p]})$. We
  must show that $h'(u) \neq h'(v)$. Suppose we have
  $K_u \cap K_v = \varnothing$. By Lemma~\ref{path}\,(b) we know that $K_u$
  and $K_v$ are adjacent. As in part~(a) we obtain $c(K_u)\ne c(K_v)$ and
  so $h'(u)\ne h'(v)$, as desired.

  We can thus assume we have $K_u \cap K_v \ne \varnothing$. We also assume $c(K_u)=c(K_v)$,
  as otherwise we would have $h'(u)\ne h'(v)$. By Lemma~\ref{AIC} we obtain
  that the corresponding cliques $K_u$ and~$K_v$ satisfy
  $\mu(K_u)=\mu(K_v)$. Now notice that $\sigma(u)$ must be different
  from~$\sigma(v)$, as otherwise there would be a walk of length $p-2$
  joining $u$ and $v$ and going through~$\sigma(u)$, which would contradict
  $d(u,v)=p$. Since $b_{\mu(K_u)}$ is injective, we obtain
  $h'(u)\ne h'(v)$, as desired.
\end{proof}

\begin{proof}[Proof of Theorem~\ref{main2}]
  We may assume that $G$ is connected. As we
  mentioned earlier, this theorem follows from Lemma~\ref{level}. Here we
  prove (a) and leave~(b) to the reader.

  Fix a vertex $x \in V(G)$. Define a function $f: V(G) \to \{0, \dots,
  p\}$ which satisfies $f(u)=k$ for all $u \in N^\ell(x)$ with $\ell \equiv
  k$ mod ($p+1$). For each level $N^{\ell}(x)$ and integer $p_i\in \{p_1,
  p_2, \dots ,p_s \}$, we define $g_{\ell,i}$ as the colouring
  of~$N^{\ell}(x)$ guaranteed by Lemma~\ref{level}, which assigns
  different colours to vertices of~$N^{\ell}(x)$ having distance $p_i$. To
  each vertex $u\in N^{\ell}$ we assign a colour $F(u)=(f(u),
  g_{\ell,1}(u), g_{\ell,2}(u) \dots ,g_{\ell,s}(u))$, and we do this for
  all $\ell$. Notice that for every $1\le i \le s$, each vertex $u\in
  N^{\ell}$ can only have distance~$p_i$ with vertices not in
  $N^{<\ell-p}(x)\cup N^{>\ell+p}(x)$. Hence, this colouring guarantees
  that, for all $1\le i \le s$, $u$ gets a different colour from $v$
  whenever $u$ and $v$ have distance $p_i$.
\end{proof}

\section{Proof of Proposition~\ref{genus}}\label{sec5} 

  We may assume $V(H)\ge 3$, as otherwise the result is trivial. We may
  also assume that $H$ is connected.

  Fix an embedding of $H$ in a surface of genus $g$. We first construct
  from $H$ a graph $H'$ of genus $g$ having the property that all of its
  faces have a cycle as its boundary. This can be done without altering
  distances by means of the following operation. Suppose $y \in V(H)$ is a
  cut vertex. There is an ordering $x_1, x_2, \dots ,x_{|N(y)|}$ of the
  vertices in $N(y)$ such that adding an edge between $x_i$ and $x_{i+1}$
  (wherever such an edge does not already exist) would not create
  crossings. Using this ordering we add a path of length 2 between $x_i$
  and $x_{i+1}$ (modulo~$|N(y)|$) if there is no edge joining the pair.
  Clearly $y$ ceases to be a cut vertex after this operation, and no new
  cut vertices are created. We repeat this operation until there are no cut
  vertices. It is easy to see that $H'$ satisfies that all of its faces
  have a cycle as its boundary, and that for every $u,v\in V(H)$ we have
  $d_H(u,v)=d_{H'}(u,v)$.

  If $H'$ is not a triangulation of genus $g$, then there is a face
  of $H$ having as its boundary a cycle $C_k$, with vertices
  $z_0, \dots, z_{k-1}$, for some $k>3$. Inside this face we draw a cycle
  $C_{k-1}$ with edges $e_1, \dots, e_{k-1}$. For all $1 \le i \le k-1$, we
  add edges joining $z_i$ with the endvertices of $e_i$. We also add
  an edge joining $z_0$ with the common endvertex of $e_1$ and
  $e_{k-1}$. It is easy to see that this can be done in such a way that no
  crossings are made, the area between $C_k$ and $C_{k-1}$ is triangulated
  and $C_{k-1}$ is the boundary of a face. Figure~\ref{cycles}
  shows how to do this for $k=7$. We call the resulting embedded
  graph $F$. If $F$ is not a triangulation, we repeat the operation on $F$,
  and we do this until we get a graph $G$ which is a triangulation of genus $g$.

  \begin{figure}[h]
    \centering
    \smallskip
    \captionsetup{justification=centering}
    \includegraphics[height=2.4 in]{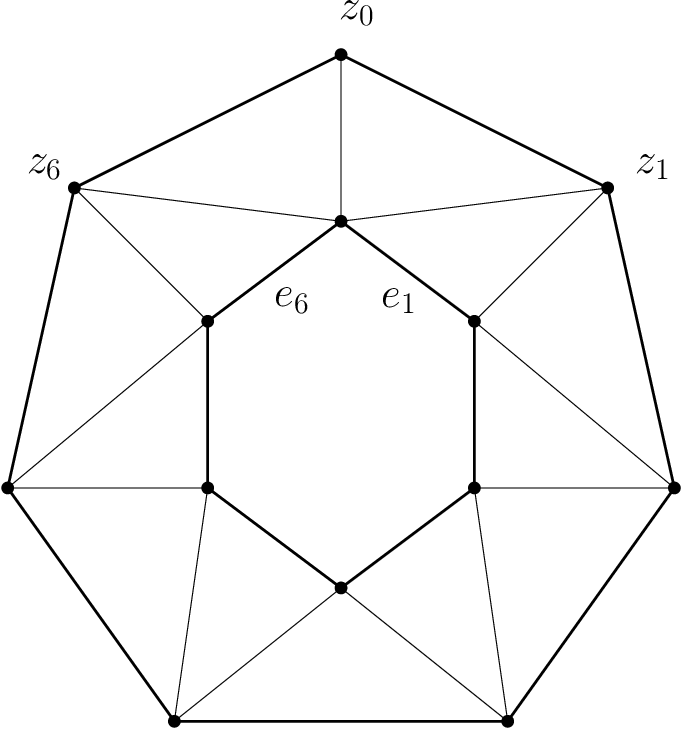}
    \caption{Drawing a $C_6$ inside a face of $H$ which has a $C_7$ as its
      boundary.}
    \label{cycles}
  \end{figure}

  To prove that  we have
  $d_H(u,v)=d_{G}(u,v)$ for every $u,v\in V(H)$, it is enough to prove that we have $d_F(u,v)=d_H(u,v)$ for every
  $u,v \in V(H)$. If this was not the case, there would be a pair of vertices $x,y \in V(C_k)$ such that $F$ contains an $xy$-path of length at
  most $d_H(x,y)-1$ with vertices in~$C_{k-1}$ and no vertices outside of
  $C_k \cup C_{k-1}$. Hence, it suffices to show that every pair of
  vertices $x,y \in V(C_k)$ satisfies $d_{F[C_k \cup C_{k-1}]}(x,y) =
  d_{C_k}(x,y)$. By the construction inside the face having~$C_k$ as its boundary, this is easy to check.

\section{Lower bounds and open problems}\label{sec6}

In~\cite{NOdM15} the following question from Van den Heuvel and
  Naserasr is mentioned: Is there a constant~$C$ such that for every odd
  integer~$p$ and every planar graph~$G$ we have $\chi(G^{[\natural p]})\le
  C$\,? Very recently Bousquet, Esperet, Harutyunyan, and De Joannis de Verclos~\cite{Betal17} gave a
  negative answer to this question in the following way. They constructed a
family of chordal graphs $U_3, U_5, \dots$ with clique
  number~3 which they proved satisfies
$\chi(U_p^{[\natural p]})\in \Omega(\frac{p}{\log (p)})$. This shows that Theorem~\ref{main1}\,(a) is asymptotically best possible (as $p$ tends to infinity), up to a
$\log(p)$ factor.

For any integer $t\ge 1$ and any odd integer $p\ge 1$, it is easy to construct a chordal graph $G$ with clique number $t$ such that $\chi(G^{[\natural p]})=t$. This can be improved, of course. For instance, Van den
Heuvel \emph{et al.}~\cite{vdHetal} constructed a chordal graph with clique number 3 such that its exact distance-3 graph has chromatic number 5. But while the upper bound on $\chi(G^{[\natural p]})$ given by Theorem~\ref{main1}\,(a) is quadratic on $t$, we cannot give any superlinear lower bound on $t$.

A consequence of Theorem~\ref{main2}\,(a) is that for every odd $p$, there
is a constant $N_{p,t}$ such that
$\chi(\edp{G}{1}\cup\edp{G}{3}\cup \dots\cup\edp{G}{p}) \le N_{p,t}$ for
all chordal graphs with clique number $t$. Ne\v{s}et\v{r}il and Ossona de
Mendez~\cite{NOdM15} gave a construction that shows that this
constant must grow with~$p$, even for chordal graphs with clique number 3.
Figure~\ref{largek} gives a simpler construction with the same property.
However, we note that the construction given by Ne\v{s}et\v{r}il and Ossona
de Mendez can be generalised to show that for every $t\ge3$ and odd
positive $p$, there is a chordal graph~$G$ with clique number $t$ such that
$\chi(\edp{G}{1}\cup\edp{G}{3}\cup \dots\cup\edp{G}{p}) \in
\Omega(t^{\lfloor p/2 \rfloor+1})$. Meanwhile, Theorem~\ref{main2}\,(a)
gives that $\chi(\edp{G}{1}\cup\edp{G}{3}\cup \dots\cup\edp{G}{p}) \in
\mathcal{O}(t^{2\lfloor p/2 \rfloor+2})$.

\begin{figure}[h]
 \centering
 \captionsetup{justification=centering}
 \bigskip
 \includegraphics[height=0.23 in]{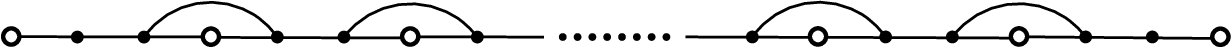}
 \medskip
 \caption{Chordal graphs $G$ with clique number 3 for which $\omega(G^{odd})$, and
   hence $\chi(G^{odd})$, can be arbitrarily large.}
  \label{largek}
\end{figure}

For a graph $G$, a natural generalisation of
$\edp{G}{1}\cup \edp{G}{3}\cup \dots \cup\edp{G}{p}$ is the
graph~$G^{odd}$, which has the same vertex set as~$G$, and~$xy$ is an edge
in~$G^{odd}$ if and only if $x$ and $y$ have odd distance. Both of the
constructions mentioned above tell us that, even for chordal graphs~$G$ with clique number 3,
the chromatic number of~$G^{odd}$ can be arbitrarily large, by witnessing that the
clique number~$\omega(G^{odd})$ can be arbitrarily large. The fact that these constructions are also planar
inspired the following question of Thomass\'{e}, which
appears in~\cite{NOdM12} (also \cite{NOdM15}).

\begin{prob}[{\cite[Problem 11.2]{NOdM12}}]\mbox{}\\*
  Is there a function~$f$ such that for every planar graph~$G$ we have
  $\chi(G^{odd})\le f(\omega(G^{odd}))$\,?
\end{prob}

\noindent
We ask whether there is a function $f_t$ such that for every chordal graph $G$ with clique number~$t$ we have
  $\chi(G^{odd})\le f_t(\omega(G^{odd}))$.

\section*{Acknowledgements}

The author thanks Barnaby Roberts for fruitful discussions at early stages
of this project, and in particular for providing the construction in
Figure~\ref{largek}. The author also thanks Jan van den Heuvel for his
helpful feedback on several drafts, Louis Esperet for sending us
  an early draft of~\cite{Betal17}, and anonymous referees for careful reading and helpful comments.

The author thankfully acknowledges support from CONICYT, PIA/Concurso Apoyo a Centros Cient\'ificos
y Tecnol\'ogicos de Excelencia con Financiamiento Basal AFB170001.

\end{document}